\newtheorem{theorem}{Theorem}
\newcommand{\clp}{\texttt{Clp}\xspace}
\newcommand{\cbc}{\texttt{Cbc}\xspace}
\newcommand{\xpress}{\texttt{Xpress}\xspace}
\newcommand{\cplex}{\texttt{Cplex}\xspace}
\newcommand{\gurobi}{\texttt{Gurobi}\xspace}
\def\bfb{\mbox{\boldmath$b$}}
\def\bfc{\mbox{\boldmath$c$}}
\def\bfd{\mbox{\boldmath$d$}}
\def\bfx{\mbox{\boldmath$x$}}
\def\bfr{\mbox{\boldmath$r$}}
\def\bfzero{\mbox{\boldmath$0$}}
\def\bflambda{\mbox{\boldmath$\lambda$}}
\def\R{\mathbb{R}}
\newcommand{\LP}[1]{\textsc{#1}\xspace}
\newcommand{\bfLP}[1]{\bf\textsc{#1}\xspace}
\newcommand{\expten}[2]{#1\!\times\!10^{\textrm{#2}}}
\def\nullbox#1#2{\setbox0=\null\ht0=#1 \dp0=#2\box0}
\def\etal{{\em et al.\ }}
\newcommand{\ignore}[1]{}
\title{A quadratic penalty algorithm for linear programming and its application to linearizations of quadratic assignment problems}
\date{University of Edinburgh\\School of Mathematics and Maxwell Institute for Mathematical Sciences\\James Clerk Maxwell Building\\Peter Guthrie Tait Road, Edinburgh, EH9 3FD, UK\\Email J.A.J.Hall@ed.ac.uk\\[\baselineskip] \today}
\author{
  I. L. Galabova 
  \and
  J. A. J. Hall}
\begin{document}
\maketitle

\begin{abstract}
This paper provides the first meaningful documentation and analysis of an established technique which aims to obtain an approximate solution to linear programming problems prior to applying the primal simplex method. The underlying algorithm is a penalty method with naive approximate minimization in each iteration. During initial iterations an approach similar to augmented Lagrangian is used. Later the technique corresponds closely to a classical quadratic penalty method. There is also a discussion of the extent to which it can be used to obtain fast approximate solutions of LP problems, in particular when applied to linearizations of quadratic assignment problems.
\end{abstract}


\section{Introduction}\label{sect:Intro}

The efficient solution of linear programming (LP) problems is crucial for a wide range of practical applications, both as problems modelled explicitly, and as subproblems when solving discrete and nonlinear optimization problems. Finding an approximate solution particularly rapidly is valuable as a ``crash start'' to an exact solution method. There are also applications where it is preferable to trade off solution accuracy for a significant increase in speed. 

The ``Idiot'' crash within the open source LP solver \clp~\cite{ClpURL} aims to find an approximate solution of an LP problem prior to applying the primal revised simplex method. In essence, the crash replaces the minimization of the linear objective subject to linear constraints by the minimization of the objective plus a multiple of a quadratic function of constraint violations. 

Section~\ref{sect:Background} sets out the context of the Idiot crash within~\clp and the very limited documentation and analysis which exists. Since the Idiot crash is later discussed in relation to the quadratic penalty and augmented Lagrangian methods, a brief introduction to these established techniques is also given. The algorithm used by the Idiot crash is set out in Section~\ref{sect:IdiotDefPlusResultsPlusTheory}, together with results of experiments on representative LP test problems and a theoretical analysis of its properties. The extent to which the Idiot crash can be used to obtain fast approximate solutions of LP problems, in particular when applied to linearizations of quadratic assignment problems (QAPs), is explored in Section~\ref{sect:FASLP}. Conclusions are set out in Section~\ref{sect:Conclusions}.

\section{Background}\label{sect:Background}

For convenience, discussion and analysis of the algorithms in this paper are restricted to linear programming (LP) problems in standard form
\begin{equation}
\textrm{minimize}~f=\bfc^T \bfx\quad\textrm{subject to}~A\bfx = \bfb\quad\bfx \ge \bfzero,
\label{eq:standardForm}
\end{equation}
\noindent where $\bfx\in\R^n$, $\bfc\in\R^n$, $A\in\R^{m \times n}$, $\bfb\in\R^m$ and $m<n$. In problems of practical interest, the number of variables and constraints can be large and the matrix $A$ is sparse. It can also be assumed that $A$ has full rank of $m$. The algorithms, discussion and analysis below extend naturally to more general LP problems. 

The Idiot crash was introduced into the open source LP solver \clp~\cite{ClpURL} in 2002 and aims to find an approximate solution of an LP problem prior to applying the primal revised simplex method. Beyond its definition as source code~\cite{IdiotComment_ClpSource}, a 2014 reference by Forrest to having given ``a bad talk on it years ago''~\cite{IdiotTalk}, as well as a few sentences of comments in the source code, documentation for the mixed-integer programming solver \cbc~\cite{IdiotComment_GAMS_CBC}, and a public email~\cite{IdiotTalk}, the Idiot crash lacks documentation or analysis. Forrest's comments stress the unsophisticated nature of the crash and only hint at its usefulness as a means of possibly obtaining an approximate solution of an LP problem prior to applying the primal simplex algorithm. However, for several test problems used in the Mittelmann benchmarks~\cite{MittelmannBenchmarksURL}, \clp is significantly faster than at least one of the three major commercial solvers (\cplex, \gurobi and \xpress), and experiments in Section~\ref{subsect:PreliminaryExperiments} show that the Idiot crash is a major factor in this relative performance. For three of these test problems, \LP{nug12}, \LP{nug15} and \LP{qap15}, which are quadratic assignment problem (QAP) linearizations, the Idiot crash is shown to be particularly effective. This serves as due motivation for studying the algorithm, understanding why it performs well on certain LP problems, notably QAPs, and how it might be of further value.

The Idiot crash terminates at a point which, other than satisfying the bounds $\bfx\ge\bfzero$, has no guaranteed properties. In particular, it satisfies no known bound on the residual $\|A\bfx-\bfb\|_2$ or distance (positive or negative) from the optimal objective value. Although some variables may be at bounds, there is no reason why the point should be a vertex solution. Thus, within the context of \clp, before the primal simplex method can be used to obtain an optimal solution to the LP problem, a ``crossover'' procedure is required to identify a basic solution from the point obtained via the Idiot crash. 

\subsection{Penalty function methods}

Although Forrest states that the Idiot crash minimizes a multiple of the LP objective plus a sum of squared primal infeasibilities~\cite{IdiotTalk}, a more general quadratic function of constraint violations is minimized. This includes a linear term, making the Idiot objective comparable with an augmented Lagrangian function. For later reference, these two established penalty function methods are outlined below.

\subsubsection*{The quadratic penalty method}

For the nonlinear equality problem
\begin{equation}
\label{eq:NLP_Equality_Problem}
\textrm{minimize}~f(\bfx)\quad\textrm{subject to}~\bfr(\bfx) = \bfzero,
\end{equation}
the quadratic penalty method minimizes 
\begin{equation}
\phi(\bfx,\mu)=f(\bfx)+\frac{1}{2\mu}\bfr(\bfx)^T\bfr(\bfx),
\label{eq:QuadraticPenaltyFunction_for_NLP}
\end{equation}
for an increasing sequence of positive values of $\mu$. If $\bfx^k$ is the global minimizer of $\phi(\bfx,\mu^k)$ and $\mu^k\downarrow 0$ then Nocedal and Wright~\cite{NoWr06} show that every limit point $\bfx^*$ of the sequence $\{\bfx^k\}$ is a global solution of~(\ref{eq:NLP_Equality_Problem}). The subproblem of minimizing $\phi(\bfx,\mu^k)$ is known to be increasingly ill-conditioned as smaller values of $\mu^k$ are used~\cite{NoWr06} and this is one motivation for the use of the augmented Lagrangian method.

\subsubsection*{The augmented Lagrangian method} 
The augmented Lagrangian method, outlined in Algorithm~\ref{alg_AL_nlp}, was originally presented as an approach to solving nonlinear programming problems. It was first proposed by Hestenes in his survey of multiplier and gradient methods~\cite{Hestenes69} and then fully interpreted and analysed, first by Powell~\cite{Powell69} and then by Rockafellar~\cite{Rockafellar74}. The augmented Lagrangian function~\eqref{eq:AL_for_NLP} is in a sense a combination of the Lagrangian function and the quadratic penalty function~\cite{NoWr06}. It is the quadratic penalty function with an explicit estimate of the Lagrange multipliers $\bflambda$.

\begin{equation}
\mathcal{L}_A(\bfx, \bflambda, \mu) = f(\bfx)  + \bflambda^T \bfr(\bfx) + \dfrac{1}{2\mu}\bfr(\bfx)^T\bfr(\bfx)
\label{eq:AL_for_NLP}
\end{equation}

\begin{algorithm}
    \caption{The augmented Lagrangian algorithm}
    \begin{algorithmic}[1]
	\Statex{ Initialize $\bfx^0 \geq \bf0$, {$\mu^0$}, $\boldsymbol{\lambda}^0  $ and a tolerance $\tau^0$ } 
	\Statex{ For $k=0,1,2,...$ }
	\Statex{ $\quad \quad$ Find an approximate minimizer $\bfx^k$ of $\mathcal{L}_A(\dot , \bflambda^k, \mu^k)$, starting at $\bfx^k$}
	\Statex{ $\quad \quad \quad \quad$ and terminating when $\| \nabla_{\bfx} \mathcal{L}_A(\bfx^k , \bflambda^k, \mu^k) \| \leq \tau^k$}
	\Statex{ $\quad\quad$ If a convergence test for \eqref{eq:NLP_Equality_Problem} is satisfied}
	\Statex{ $\quad\quad\quad\quad$ \textbf{stop} with an approximate solution $\bfx^k$}
	\Statex{ $\quad\quad$ End if } 
	\Statex{ $\quad\quad$ Update Lagrange multipliers $\bflambda$}
	\Statex{ $\quad\quad\quad\quad$ Set $\bflambda_i^{k+1} = \bflambda_i^{k} + \mu^k \bfr(\bfx^k)$}
	\Statex{ $\quad\quad$ Choose new penalty parameter $\mu^{k+1}$ so that $0 \leq \mu^{k+1} \leq \mu^k$}
	\Statex{ $\quad\quad$ Choose new tolerance $\tau^{k+1}$}
	\Statex{ End}
    \end{algorithmic}
    \label{alg_AL_nlp}
\end{algorithm}

Although originally intended for nonlinear programming problems, the augmented Lagrangian method has also been applied to linear programming problems~\cite{EvGoMo05, Gu92}. However, neither article assesses its performance on large scale practical LP problems.

\section{The Idiot crash}\label{sect:IdiotDefPlusResultsPlusTheory}

This section presents the Idiot crash algorithm, followed by some practical and mathematical analysis of its behaviour. Experiments assess the extent to which the Idiot crash accelerates the time required to solve representative LP test problems using the primal revised simplex method, and can be used to find a feasible and near-optimal solution of the problems. Theoretical analysis of the limiting behaviour of the algorithm shows that it will solve any LP problem which has an optimal solution.

\subsection{The Idiot crash algorithm}

The Idiot crash algorithm minimizes the function 
\begin{equation}
h(\bfx) = \bfc^T\bfx + \bflambda^T\bfr(\bfx) + \dfrac{1}{2\mu} \bfr(\bfx)^T\bfr(\bfx),\quad \textrm{where}\quad \bfr(\bfx)=A\bfx-\bfb
\label{eq:IdiotFunction}
\end{equation}
subject to the bounds $\bfx\ge\bfzero$ for sequences of values of parameters $\bflambda$ and $\mu$. The minimization is not performed exactly, but approximately, by minimizing with respect to each component of $\bfx$ in turn. The general structure of the algorithm is set out in Algorithm~\ref{alg_penaltyIdiot}. The number of iterations is determined heuristically according to the size of the LP and progress of the algorithm. Unless the Idiot crash is abandoned after up to around $20$ ``sample'' iterations (see below), the number of iterations performed ranges between 30 and 200. The value of $\mu^0$ ranges between $0.001$ and $1000$, again according to the LP dimensions. When $\mu$ is changed, the factor by which it is reduced is typically $\omega=0.333$. The final value of $\mu$ is typically a little less than machine precision.

\begin{algorithm}
    \caption{The Idiot penalty algorithm}
    \begin{algorithmic}[1]
	\Statex{ Initialize $\bfx^0\ge\bfzero$, {$\mu^0$}, $\boldsymbol{\lambda}^0 = \bf0$ 
    } 
	\Statex{ Set $\mu^1=\mu^0$ and $\boldsymbol{\lambda}^1=\boldsymbol{\lambda}^0$}
	\Statex{ For $k=1,2,3,...$ }
	\Statex{ $\quad \quad \bfx^k = \displaystyle \arg \min_{\bfx\ge\bfzero} h(\bfx) = \bfc^T\bfx + 
	\boldsymbol{\lambda}^T\bfr(\bfx) + \dfrac{1}{2\mu} \bfr(\bfx)^T\bfr(\bfx)$}
	\Statex{ $\quad\quad$ If a criterion is satisfied (see \ref{notes}) update $\mu$:}
	\Statex{ $\quad\quad\quad\quad$ $\mu^{k+1}=\mu^k/\omega$, for some factor $\omega>1$}
	\Statex{ $\quad\quad\quad\quad$ $\bflambda^{k+1}=\bflambda^k$} 
	\Statex{ $\quad\quad$ Else update $\bflambda$:}
	\Statex{ $\quad\quad\quad\quad$ $\mu^{k+1}=\mu^k$}
	\Statex{ $\quad\quad\quad\quad$ $\bflambda^{k+1}=\mu^k\bfr(\bfx^k)$} 
	\Statex{ End}
    \end{algorithmic}
    \label{alg_penaltyIdiot}
\end{algorithm}

The version of the Idiot crash implemented in \clp has several additional features. At the start of the crash, the approximate component-wise minimization is performed twice for each component of $\bfx$. If a $10\%$ decrease in primal infeasibility is not observed in about $30$ iterations, it is considered that the Idiot crash would not be beneficial and the simplex algorithm is started from the origin. If a $10\%$ reduction of the primal infeasibility is observed, then the mechanism for approximate minimization is adjusted. During each subsequent iteration, the function $h(\bfx)$ is minimized for each component $105$ times. There is no indication why this particular value was chosen. However, one of the features is the possibility to decrease this number, so to minimize for each component fewer than $105$ times. From the $50^{\textnormal{th}}$ minimization onwards a check is performed after the function is minimized $10$ times for each component. Progress is measured with a moving average of expected progress. If it is considered that not enough progress is being made, the function is not minimized any longer for the same values of the parameters. Instead, one of $\mu$ or $\bflambda$ is updated and the next iteration is performed. Thus, in the cases when it is likely that the iteration would not be beneficial, not much unnecessary time is spent during it. Another feature is that in some cases there is a limit on the step size for the update of each $x_j$. Additionally, there is a statistical adjustment of the values of $\bfx$ at the end of each iteration. These features are omitted from this paper since experiments showed that they have little or no effect on performance. Depending on the problem size and structure, the weight parameter ($\mu$) is updated either every $3$ iterations, or every $6$. Again, there is no indication why these values are chosen.  To a large extent it must be assumed that the algorithm has been tuned to achieve a worthwhile outcome when possible, and terminate promptly when it is not. The dominant computational cost corresponds to a matrix-vector product with the matrix $A$ for each set of minimizations over the components of $\bfx$.

\subsubsection*{Relation to augmented Lagrangian and quadratic penalty function methods}

In form, the augmented Lagrangian function~(\ref{eq:AL_for_NLP}) and Idiot function~(\ref{eq:IdiotFunction}) are identical for LP problems and in both methods the penalty parameter $\mu$ is reduced over a sequence of iterations. However, they differ fundamentally in the update of $\bflambda$. For the Idiot crash, new values of $\bflambda$ are given by $\bflambda^{k+1}=\mu^k\bfr(\bfx^k)$. Since $\mu$ is reduced to around machine precision and the aim is to reduce $\bfr(\bfx)$ to zero, the components of $\bflambda$ become small. Contrast this with the values of $\bflambda$ in the augmented Lagrangian method, as set out in Algorithm~\ref{alg_AL_nlp}. These are updated by the value $\mu^k\bfr(\bfx^k)$ and converge to the (generally non-zero) Lagrange multipliers for the equations. 

In the Idiot algorithm, when the values of $\bflambda$ are updated, the linear and quadratic functions of the residual $\bfr(\bfx)$ in the Idiot function~(\ref{eq:AL_for_NLP}) are respectively $\mu^k\bfr(\bfx^k)^T\bfr(\bfx)$ and $(2\mu^k)^{-1}\bfr(\bfx)^T\bfr(\bfx)$. Thus, since the values of $\mu^k$ are soon significantly less than unity, the linear term becomes relatively negligible. In this way the Idiot objective function reduces to the quadratic penalty function~(\ref{eq:QuadraticPenaltyFunction_for_NLP}) and the later behaviour of the Idiot crash is akin to that of a simple quadratic penalty method.

\subsection{Preliminary experiments}
\label{subsect:PreliminaryExperiments}

The effectiveness of the Idiot crash is assessed via experiments with \clp (Version 1.16.10), using a set of 30 representative LP test problems in Table~\ref{tab:Probsspeed-upIdiotPercent}. This is the set used by Huangfu and Hall in~\cite{HuHa13}, with \LP{qap15} replacing \LP{dcp2} due to QAP problems being of particular interest and the latter not being a public test problem, and \LP{nug15} replacing \LP{nug12} for consistency with the choice of QAP problems used by Mittelmann~\cite{MittelmannBenchmarksURL}. The three problems \LP{nug15}, \LP{qap12} and \LP{qap15} are linearizations of quadratic assignment problems, where \LP{nug15} and \LP{qap15} differ only via row and column permutations. The experiments in this paper are carried out on a Intel i7-6700T processor rated at 2.80GHz with 16GB of available memory. In all cases the \clp presolve routine is run first, and is included in the total solution times.

\begin{table}
\centerline{
\begin{tabular}{|l|r|r||l|r|r|}\hline Model& Speed-up&Idiot (\%)&Model& Speed-up&Idiot (\%)\\\hline
\LP{cre-b} & 2.6 & 28.9 & \LP{pds-40} & 1.3 & 5.0\\
\bfLP{dano3mip} & 1.4 & 3.6 & \LP{pds-80} & 1.0 & 0.1\\
\bfLP{dbic1} & 1.5 & 40.6 & \LP{pilot87} & 1.3 & 2.5\\
\LP{dfl001} & 1.0 & 0.1 & \bfLP{qap12} & 2.5 & 0.6\\
\LP{fome12} & 1.1 & 0.1 & \bfLP{qap15} & 4.0 & 0.1\\
\LP{fome13} & 1.9 & 3.3 & \bfLP{self} & 6.1 & 22.7\\
\LP{ken-18} & 1.0 & 0.7 & \LP{sgpf5y6} & 1.4 & 4.8\\
\LP{l30} & 1.9 & 1.4 & \bfLP{stat96v4} & 1.7 & 1.2\\
\bfLP{Linf\_520c} & 9.4 & 8.2 & \LP{storm\_1000} & 4.5 & 0.8\\
\bfLP{lp22} & 1.4 & 1.9 & \LP{storm-125} & 4.1 & 10.1\\
\bfLP{maros-r7} & 0.9 & 7.8 & \LP{stp3d} & 6.5 & 0.9\\
\LP{mod2} & 1.4 & 2.7 & \LP{truss} & 0.8 & 17.1\\
\LP{ns1688926} & 1.4 & 1.0 & \LP{watson\_1} & 1.8 & 8.9\\
\bfLP{nug15} & 4.2 & 0.1 & \LP{watson\_2} & 1.1 & 4.4\\
\LP{pds-100} & 2.5 & 5.4 & \LP{world} & 1.3 & 2.0\\
\hline
\end{tabular}}
\caption{Test problems, percentage of solution time accounted for by the Idiot crash, and the speed-up of the primal simplex solver when the Idiot crash is used}
\label{tab:Probsspeed-upIdiotPercent}
\end{table}

To assess the effectiveness of the Idiot crash in speeding up the \clp primal simplex solver over all the test problems, total solution times were first recorded when running \clp with the \texttt{-primals} option. This forces \clp to use the primal simplex solver but makes no use of the Idiot crash. To compare these with total solution time when \clp uses the primal simplex solver following the Idiot crash, it was necessary to edit the source code so that \clp is forced to use the Idiot crash and primal simplex solver. However, otherwise, it ran as in its default state. The relative total solution times are given in the columns in Table~\ref{tab:Probsspeed-upIdiotPercent} headed "Speed-up". The geometric mean speed-up is 1.9, demonstrating clearly the general value of the Idiot crash for the \clp primal simplex solver. Although the Idiot crash is of little or no value (speed-up below 1.25) for seven of the 30 problems, for only two of these problems does it lead to a small slow-down. However, for ten of the 30 problems the speed-up is at least 2.5, a huge improvement as a consequence of using the Idiot crash. The columns headed "Idiot (\%)" give the percentage of the total solution time accounted for by the Idiot crash, the mean value being 6.2\%. For five problems the percentage is ten or more, and this achieves a handsome speed-up in three cases. However, it does include \LP{truss}, for which the Idiot crash takes up 17\% of an overall solution time which is 20\% more than when using the vanilla primal simplex solver. For only this problem can the Idiot crash be considered a significant and unwise investment. Of the ten problems where the Idiot crash results in a speed-up of at least 2.5, for only three does it account for at least ten percent of the total solution time. Indeed, for five of these problems the Idiot crash is no more than one percent of the total solution time. 

This remarkably cheap way to improve the performance of the primal simplex solver is not always of value to \clp since, when it is run without command line options (other than the model file name), it decides whether to use its primal or dual simplex solver. When the former is used, \clp uses problem characteristics to decide whether to use the Idiot crash and, if used, to set parameter values for the algorithm. Default \clp chooses the primal simplex solver (and always performs the Idiot crash) for just the ten LP problems whose name is given in bold type. For half of these problems there is a speed-up of at least 2.5, so the Idiot crash contributes significantly to the ultimate performance of \clp. 
However, for five problems (\LP{cre-b}, \LP{pds-100}, \LP{storm-125}, \LP{storm\_1000} and \LP{stp3d}), the Idiot crash yields a primal simplex speed-up of at least 2.5 but, when free to choose, \clp uses its dual simplex solver. In each case the dual simplex solver is at least as fast as using the primal simplex solver following the Idiot crash, the geometric mean superiority being a factor of 4.0, so the choice to use the dual simplex solver is justified. 

Further evidence of the importance of the Idiot crash to the performance of \clp is given in Table~\ref{tab:Mittelmann}, which gives the solution times from the Mittelmann benchmarks~\cite{MittelmannBenchmarksURL} for the three major commercial simplex solvers and \clp when applied to five notable problem instances. When solving \LP{Linf\_520c}, \clp is vastly faster than the three commerical solvers. For the three QAP linearisations (\LP{nug15}, \LP{qap12} and \LP{qap15}) problems, \clp is very much faster than \cplex. Finally, for \LP{self}, \clp is significantly faster than the commerical solvers.

\begin{table}
\centerline{
\begin{tabular}{|l|rrr|r|}\hline
Model & \cplex & \gurobi & \xpress & \clp\\\hline
\LP{Linf\_520c} & 495 & 1057 & 255 & 35\\
\LP{nug15} & 338 & 14 & 7 & 14\\
\LP{qap12} & 26 & 1 & 1 & 5\\
\LP{qap15} & 365 & 14 & 6 & 13\\
\LP{self} & 18 & 12 & 15 & 5\\
\hline
\end{tabular}}
\caption{Performance of \cplex-12.8.0, \gurobi-7.5.0, \xpress-8.4.0 and \clp-1.16.10 on five notable problem instances from the Mittelmann benchmarks (29/12/17)~\cite{MittelmannBenchmarksURL}}
\label{tab:Mittelmann}
\end{table}

\begin{table}
\centerline{
\begin{tabular}{|l|ll||l|ll|}\hline
Model&Residual&Objective&Model&Residual&Objective\\\hline
\nullbox{12pt}{0pt}
\LP{cre-b} & $\expten{1.3}{-9}$ & $\expten{6.1}{-2}$ & \LP{pds-40} & $\expten{7.0}{-8}$ & $\expten{3.0}{-2}$\\
\LP{dano3mip} & $\expten{6.1}{-10}$ & $\expten{2.0}{-2}$ & \LP{pds-80} & $\expten{2.2}{-7}$ & $\expten{3.4}{-1}$\\
\LP{dbic1} & $\expten{3.8}{-1}$ & $\expten{8.5}{-2}$ & \LP{pilot87} & $\expten{2.1}{0}$ & $\expten{6.8}{-1}$\\
\LP{dfl001} & $\expten{1.1}{-9}$ & $\expten{3.7}{-3}$ & \LP{qap12} & $\expten{3.6}{-10}$ & $\expten{1.7}{-1}$\\
\LP{fome12} & $\expten{6.4}{-9}$ & $\expten{4.3}{-3}$ & \LP{qap15} & $\expten{2.1}{-10}$ & $\expten{2.8}{-3}$\\
\LP{fome13} & $\expten{1.2}{-8}$ & $\expten{5.2}{-3}$ & \LP{self} & $\expten{5.7}{-5}$ & $\expten{2.4}{-3}$\\
\LP{ken-18} & $\expten{5.4}{-8}$ & $\expten{7.1}{-2}$ & \LP{sgpf5y6} & $\expten{4.0}{-10}$ & $\expten{2.1}{-1}$\\
\LP{l30} & $\expten{1.1}{-9}$ & $\expten{3.9}{0}$ & \LP{stat96v4} & $\expten{3.0}{-3}$ & $\expten{1.0}{0}$\\
\LP{Linf\_520c} & $\expten{1.1}{-1}$ & $\expten{9.1}{-3}$ & \LP{storm\_1000} & $\expten{5.9}{-6}$ & $\expten{5.9}{-2}$\\
\LP{lp22} & $\expten{1.1}{-9}$ & $\expten{1.3}{-3}$ & \LP{storm-125} & $\expten{1.4}{0}$ & $\expten{1.2}{-1}$\\
\LP{maros-r7} & $\expten{4.0}{-9}$ & $\expten{2.3}{-5}$ & \LP{stp3d} & $\expten{7.0}{-5}$ & $\expten{2.7}{-2}$\\
\LP{mod2} & $\expten{3.9}{0}$ & $\expten{2.1}{-1}$ & \LP{truss} & $\expten{7.1}{-1}$ & $\expten{3.2}{-1}$\\
\LP{ns1688926} & $\expten{2.5}{-9}$ & $\expten{4.8}{5}$ & \LP{watson\_1} & $\expten{7.7}{-6}$ & $\expten{8.7}{-1}$\\
\LP{nug15} & $\expten{2.1}{-10}$ & $\expten{3.7}{-4}$ & \LP{watson\_2} & $\expten{1.4}{-10}$ & $\expten{9.7}{-1}$\\
\LP{pds-100} & $\expten{7.6}{-10}$ & $\expten{3.7}{-4}$ & \LP{world} & $\expten{4.3}{0}$ & $\expten{5.5}{-1}$\\
\hline
\end{tabular}}
\caption{Residual and objective error following the Idiot crash in \clp}
\label{tab:ProbsRsduObjEr}
\end{table}

To asses the limiting behaviour of the Idiot crash as a means of finding a point which is both feasible and optimal, \clp was run with the \texttt{-idiot 200} option using the modified code which forces the Idiot crash to be used on all problems. The results are given in Table~\ref{tab:ProbsRsduObjEr}, where the columns headed ``Residual'' contains the final values of $\|A\bfx-\bfb\|_2$. The columns headed ``Objective'' contains values of $(f-f^*)/\max\{1,|f^*|\}$ as a measure of how relatively close the final value of $f$ is to the known optimal value $f^*$, referred to below as the objective error. This measure of optimality is clearly of no practical value, since $f^*$ is not known. However, it is instructive empirically, and motivates later theoretical analysis. The geometric mean of the residuals is $\expten{1.2}{-6}$ and the geometric mean of the objective error measures is $\expten{6.1}{-2}$.

\begin{figure}
\centerline{\includegraphics[height=10cm]{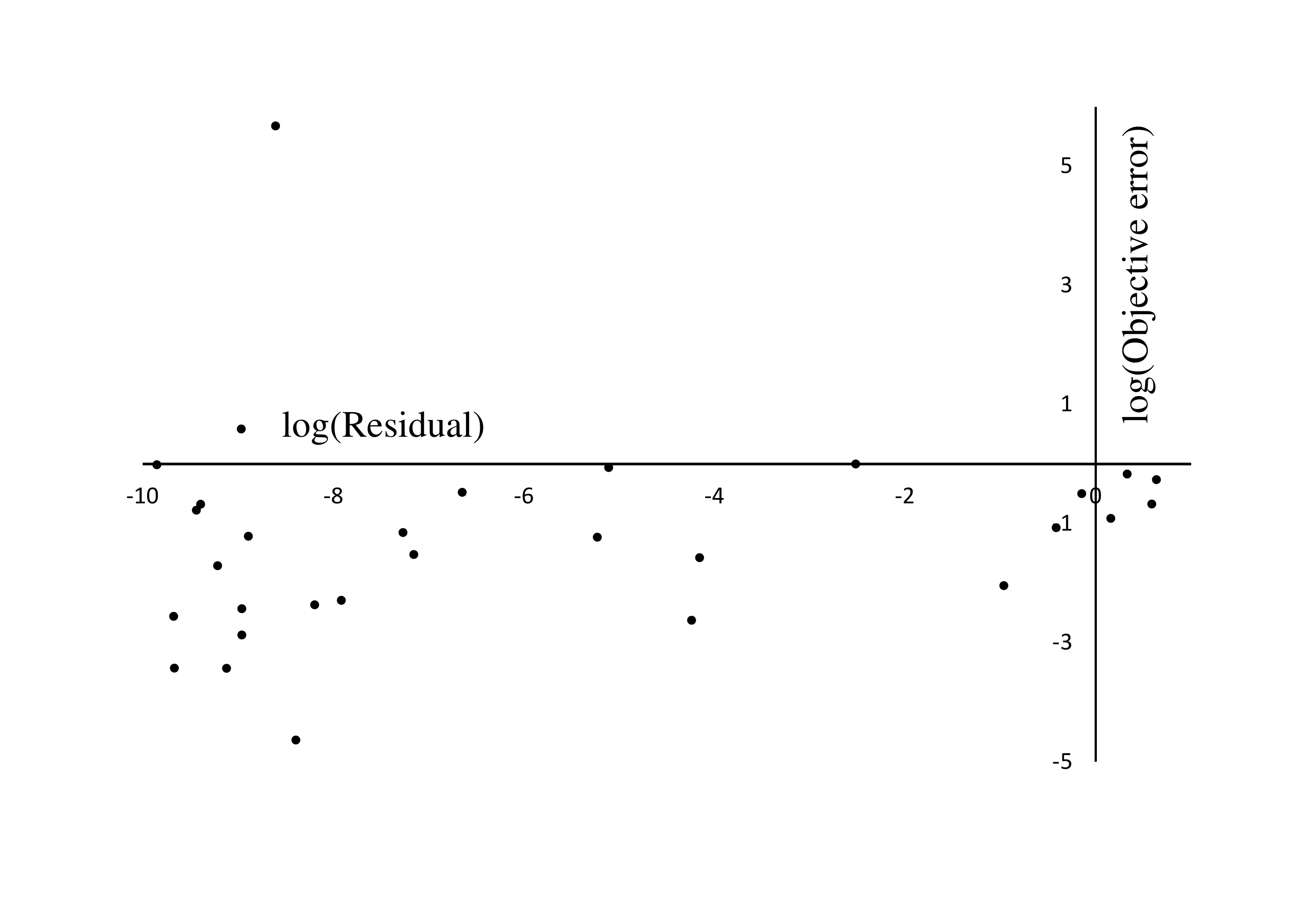}}
\vspace{-1.5cm}
\caption{Distribution of residual and objective errors}
\label{fig:ResObjEr}
\end{figure}

For 17 of the 30 problems in Table~\ref{tab:ProbsRsduObjEr}, the norm of the final residual is less than $10^{-7}$. Since this is the default primal feasibility tolerance for the \clp simplex solver, the Idiot crash can be considered to have obtained an acceptably feasible point. Amongst these problems, the objective error ranges between $\expten{4.8}{5}$ for \LP{ns1688926} and $\expten{2.3}{-5}$ for \LP{maros-r7}, with only eight problems having a value less than $10^{-2}$. Thus, even if the Idiot crash yields a feasible point, it may be far from being optimal. A single quality measure for the point returned by the Idiot crash is convenient, and this is provided by the product of the residual and objective error. As illustrated by the distribution of the objective errors and residual in Figure~\ref{fig:ResObjEr}, it is unsurprising that there are no problems for which a low value of this product corresponds to an accurate optimal objective function value but large residual. 

The observations resulting from the experiments above yield three questions which merit further study. Firstly, since the Idiot crash yields a near-optimal solution for some problems, to what extent does the Idiot crash possess theoretical optimality and convergence properties? Secondly, since the Idiot crash performs particularly well for some problems and badly for others, what problem features might characterise this behaviour? Thirdly, for any problem class where the Idiot crash appears to perform well, might this be valuable? These questions are addressed in the remainder of this paper.

\subsection{Analysis}

In analysing the Idiot algorithm, the initial focus is the Idiot function~(\ref{eq:IdiotFunction}). Fully expanded, this is the quadratic function
$$
h(\bfx) = \dfrac{1}{2\mu} \bfx^TA^TA\bfx + (\bfc^T+\bflambda^TA-\dfrac{1}{\mu} \bfb^TA)\bfx - \bflambda^T\bfb + \dfrac{1}{2\mu}\bfb^T\bfb.
$$
Although convexity of the function follows from the Hessian matrix $A^TA$ being positive semi-definite, it has rank $m<n$. However, the possibility of unboundedness of $h(\bfx)$ on $\bfx\ge\bfzero$ can be discounted as follows. Firstly, observe that unboundedness could only occur in non-negative directions of zero curvature so they must satisfy $A\bfd=\bfzero$. Hence $h(\bfx+\alpha\bfd)=h(\bfx)+\alpha\bfc^T\bfd$ which, if unbounded below for increasing $\alpha$, implies unboundedness of the LP along the ray $\bfx+\alpha\bfd$ from any point $\bfx\ge\bfzero$ satisfying $A\bfx=\bfb$. Thus, so long as the LP is neither infeasible, nor unbounded, $h(\bfx)$ is bounded below on $\bfx\ge\bfzero$. \\

For some problems, the size of the residual and objective measures in Table~\ref{tab:ProbsRsduObjEr} indicate that Idiot has found a point which is close to being optimal. It is, therefore, of interest to know whether the Idiot crash possess theoretical optimality and convergence properties. The Idiot algorithm with approximate minimization of the Idiot function~(\ref{eq:IdiotFunction}) is not conducive to detailed mathematical analysis. However, Theorem~\ref{Theorem_1} shows that if the Idiot function is minimized exactly and an optimal solution to the LP exists, every limit point of the sequence $\left\{\bfx^k\right\}$ is a solution to the problem.

\subsubsection{Notes} 
\label{notes}
\begin{itemize}
\item During each iteration of the loop at most one of the two parameters $\mu$ and $\bflambda$ is updated: in \clp, $\mu$ is updated once every few (e.g. $3$ or $6$) iterations. How often $\mu$ is updated does not affect the validity of the proof as long as $\left\{\mu^k\right\} \to 0$ as $k \to \infty$ and $\bflambda$ is updated at least once every $W$ iterations for some constant $W$.\\

\item In the statement of Algorithm \ref{alg_penaltyIdiot} it is said that the constant $\omega$ is larger than $1$. This is not required for the proof, which would still hold in the case of non-monotonicity of  $\left\{\mu^k\right\}$ as long as $\left\{\mu^k\right\} \to 0$ as $k \to \infty$.\\
\end{itemize}

\begin{theorem}
\label{Theorem_1}
Suppose, that $\bfx^k$ is the exact global minimizer of $h^k(\bfx)$ for each $k=1,2...$ and that $\left\{\mu^k\right\} \to 0$ as $k \to \infty$. Then every limit point of the sequence $\left\{\bfx^k\right\}$ is a solution to the problem \eqref{eq:standardForm}.\\
\end{theorem}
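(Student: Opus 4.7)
My approach would adapt the classical convergence argument for quadratic penalty methods to the augmented setting at hand. The starting observation is that for any LP-feasible $\bfx^*$, and in particular for an optimal one with value $f^* = \bfc^T\bfx^*$, we have $\bfr(\bfx^*) = \bfzero$, so $h^k(\bfx^*) = f^*$. Since $\bfx^k$ minimises $h^k$ over $\bfx \ge \bfzero$ and $\bfx^* \ge \bfzero$ is a competitor, this yields the master inequality
$$
\bfc^T\bfx^k + \bflambda^{kT}\bfr(\bfx^k) + \frac{1}{2\mu^k}\|\bfr(\bfx^k)\|_2^2 \le f^*,
$$
which is the engine behind everything that follows.

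The main obstacle is controlling the cross term $\bflambda^{kT}\bfr(\bfx^k)$. The update rule $\bflambda^{k+1} = \mu^k\bfr(\bfx^k)$, together with the hypothesis from the Notes that $\bflambda$ is refreshed at least every $W$ iterations, gives $\bflambda^k = \mu^{\ell}\bfr(\bfx^{\ell})$ for some $\ell \in [k-W,\,k-1]$ (once $k \ge W$). Thus proving $\|\bflambda^k\|_2$ stays bounded reduces to bounding $\mu^{\ell}\|\bfr(\bfx^{\ell})\|_2$ uniformly in $\ell$. I would complete the square in the master inequality to obtain
$$
\bfc^T\bfx^k + \frac{1}{2\mu^k}\|\bfr(\bfx^k) + \mu^k\bflambda^k\|_2^2 \le f^* + \frac{\mu^k}{2}\|\bflambda^k\|_2^2,
$$
and use it as a bootstrap: the RHS carries $\|\bflambda^k\|_2^2$ weighted by the vanishing factor $\mu^k$, while the LHS controls $\|\bfr(\bfx^k) + \mu^k\bflambda^k\|_2$ and hence, by the triangle inequality, $\mu^k\|\bfr(\bfx^k)\|_2$. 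Feeding this bound back into the update rule then forces $\|\bflambda^{k+1}\|_2$ to remain bounded and indeed $\mu^k\|\bflambda^k\|_2 \to 0$ as $\mu^k$ shrinks.

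Once $\{\bflambda^k\}$ is under control, the remaining steps are the usual penalty-method routine. Let $\bar{\bfx}$ be a limit point of $\{\bfx^k\}$, with $\bfx^{k_j}\to \bar{\bfx}$. Along this subsequence both $\bfc^T\bfx^{k_j}$ and $\bfr(\bfx^{k_j})$ are bounded, so the master inequality yields $\|\bfr(\bfx^{k_j})\|_2^2 \le 2\mu^{k_j}\bigl(f^* - \bfc^T\bfx^{k_j} - \bflambda^{k_jT}\bfr(\bfx^{k_j})\bigr) \to 0$ as $\mu^{k_j}\to 0$. Continuity of $\bfr$ then gives $\bfr(\bar{\bfx}) = \bfzero$, and combined with $\bar{\bfx}\ge \bfzero$ this shows $\bar{\bfx}$ is feasible for (\ref{eq:standardForm}). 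For optimality, I would discard the nonnegative quadratic term from the master inequality to obtain $\bfc^T\bfx^{k_j} \le f^* - \bflambda^{k_jT}\bfr(\bfx^{k_j})$, and take $j\to\infty$: the right-hand side tends to $f^*$ because $\{\bflambda^{k_j}\}$ is bounded and $\bfr(\bfx^{k_j})\to\bfzero$, so $\bfc^T\bar{\bfx}\le f^*$. Feasibility of $\bar{\bfx}$ forces the reverse inequality, hence $\bfc^T\bar{\bfx} = f^*$ and $\bar{\bfx}$ is optimal.

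The hard part throughout is the bootstrap bound on $\{\bflambda^k\}$: the iteration indices $\ell$ that produced $\bflambda^k$ generally differ from those on the convergent subsequence $\{k_j\}$, and $\bfc^T\bfx^k$ is only controlled once some a priori bound on $\|\bfr(\bfx^k)\|_2$ is in hand, so care is needed to close the loop without circularity. A cleaner alternative would exploit KKT stationarity of $\bfx^k$, which exhibits $-\bflambda^k - \bfr(\bfx^k)/\mu^k$ as a dual-feasible vector and thereby delivers an independent upper bound via weak LP duality on both $\bflambda^k$ and $\bfc^T\bfx^k$; this side-steps the recursion entirely and is the route I would adopt if the direct bootstrap proved delicate.
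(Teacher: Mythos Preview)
Your approach is essentially the same as the paper's: both derive the master inequality $h^k(\bfx^k)\le h^k(\bar{\bfx})=f^*$ from global minimality of $\bfx^k$, and both use it twice---once to force $\bfr(\bfx^*)=\bfzero$ along a convergent subsequence, and once (after discarding the nonnegative quadratic term) to obtain $\bfc^T\bfx^*\le f^*$.

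The one substantive difference is in handling the cross term ${\bflambda^k}^T\bfr(\bfx^k)$. The paper substitutes $\bflambda^k=\mu^p\bfr(\bfx^p)$ for some $p$ with $k-W\le p<k$ and simply asserts that $\mu^p\bfr(\bfx^p)^T\bfr(\bfx^k)\to 0$ because $\mu^p\to 0$, without ever verifying that $\bfr(\bfx^p)$ stays bounded at these off-subsequence indices~$p$. You correctly identify this as the crux and propose either a bootstrap via completing the square or, more cleanly, a KKT/weak-duality argument to bound $\{\bflambda^k\}$ uniformly. These are genuine additions that patch a point the paper glosses over, rather than a different route; your final passage to the limit is then identical to the paper's.
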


\begin{proof}
\noindent Let $\bar{\bfx}$ be a solution of \eqref{eq:standardForm} so, for all feasible $\bfx$, $ \bfc^T \bar{\bfx} \leq \bfc^T\bfx$. For each $k$, $\bfx^k$ is the exact global minimizer of 

\begin{equation}
\begin{array}{rl}
\label{eqProof}
\displaystyle \min_{\bfx} &h^k(\bfx) = \bfc^T \bfx + {\bflambda^k}^T\bfr(\bfx) + \dfrac{1}{2\mu^k}\bfr(\bfx)^T\bfr(\bfx) \\
\textrm{s.t.} & \bfx \geq \bf0,\\
\end{array}
\end{equation}

\noindent and, since $\bar{\bfx}$ is feasible for \eqref{eq:standardForm}, it is also feasible for \eqref{eqProof}. Thus, since $h^k(\bfx^k) \leq  h^k(\bar{\bfx})$ for each $k$, it follows that 

\begin{align}
	\bfc^T \bfx^k + {\bflambda^k}^T\bfr(\bfx^k) + \dfrac{1}{2\mu^k}\bfr(\bfx^k)^T\bfr(\bfx^k) &\leq  \bfc^T \bar{\bfx} + {\bflambda^k}^T\bfr(\bar{\bfx}) + \dfrac{1}{2\mu^k}		\bfr(\bar{\bfx})^T\bfr(\bar{\bfx}).
	\label{eqStar}
\end{align}

\noindent Since $\bar{\bfx}$ is a solution of~\eqref{eq:standardForm}, $\bfr({\bar{\bfx}})=0$, so
\eqref{eqStar} simplifies to


\begin{align}
	\bfc^T \bfx^k + {\bflambda^k}^T\bfr(\bfx^k) + \dfrac{1}{2\mu^k}\bfr(\bfx^k)^T\bfr(\bfx^k) &\leq  \bfc^T \bar{\bfx} \nonumber \\
    \implies\qquad
	 \dfrac{1}{2\mu^k}\bfr(\bfx^k)^T\bfr(\bfx^k) &\leq   \bfc^T \bar{\bfx} - \bfc^T \bfx^k - {\bflambda^k}^T\bfr(\bfx^k)\nonumber  \\
    \implies\qquad\qquad\!
	\bfr(\bfx^k)^T\bfr(\bfx^k) &\leq  2\mu^k \big ( \bfc^T \bar{\bfx} - \bfc^T \bfx^k - {\bflambda^k}^T\bfr(\bfx^k) \big ).
	\label{eqLimit1}
\end{align}

\noindent At the end of the previous iteration of the loop in Algorithm \ref{alg_penaltyIdiot}, one of the two parameters $\mu$ and $\bflambda$ was updated. If during the previous iteration the update was of $\bflambda$, then $$\bflambda^{k}=\mu^{k-1}\bfr(\bfx^{k-1}).$$

\noindent Alternatively, during the previous iteration $\mu$ was updated and $\bflambda$ remained unchanged, so $$\bflambda^{k}=\bflambda^{k-1}.$$ Consider iterations $k-W\ldots k-1$ of the loop and let $p$ be the index of the latest iteration when the value of $\bflambda$ was changed. Then for some $p$ satisfying $k-W \leq p < k$,

 $$\bflambda^{k}=\mu^{p}\bfr(\bfx^{p}).$$

\noindent Suppose that $\bfx^*$ is a limit point of $ \left\{ \bfx^k   \right\}$, so that there is an infinite subsequence $\mathcal{K}$ such that 
$$\displaystyle \lim_{k \in \mathcal{K}} \bfx^k = \bfx^*.$$

\noindent Taking the limit in inequality \eqref{eqLimit1},

\begin{equation}
\displaystyle \lim_{k \in \mathcal{K}} \bfr(\bfx^k)^T\bfr(\bfx^k) \leq  \displaystyle \lim_{k \in \mathcal{K}} 2\mu^k \big ( \bfc^T \bar{\bfx} - \bfc^T \bfx^k - {\bflambda^k}^T\bfr(\bfx^k) \big).
\label{eqLimit2}
\end{equation}

\noindent For all $k>W$ there is an index $p$ with $k-W \leq p < k$  and $\bflambda^{k}=\mu^{p}\bfr(\bfx^{p})$ so the value of $\bflambda^{k}$ can be substituted in \eqref{eqLimit2} to give

\begin{align*}
\displaystyle \lim_{k \in \mathcal{K}} \bfr(\bfx^k)^T\bfr(\bfx^k)  &\leq  \displaystyle \lim_{k \in \mathcal{K}} 2\mu^k \big ( \bfc^T \bar{\bfx} - \bfc^T \bfx^k - {\bflambda^k}^T\bfr(\bfx^k) \big ) \\
    \implies\qquad
\bfr(\bfx^*)^T\bfr(\bfx^*) &=  \displaystyle \lim_{k \in \mathcal{K}} 2\mu^k \big ( \bfc^T \bar{\bfx} - \bfc^T \bfx^k - {\mu^{p}\bfr(\bfx^{p})}^T\bfr(\bfx^k) \big) \\
    \implies\qquad\qquad\!\!
\|\bfr(\bfx^*)\|^2 &=  \displaystyle \lim_{k \in \mathcal{K}} 2\mu^k( \bfc^T \bar{\bfx} - \bfc^T \bfx^k) - \displaystyle \lim_{k \in \mathcal{K}} 2\mu^k {\mu^{p}\bfr(\bfx^{p})}^T\bfr(\bfx^k)  = 0,
\end{align*}
since $\left\{\mu^k\right\} \to 0$  for $k \in \mathcal{K}$, so $A\bfx^*=\bfb$. For each $\bfx^k$, $\bfx^k \geq \bf0$, so after taking the limit $\bfx^* \geq \bf0$. Thus, $\bfx^*$ is feasible for \eqref{eq:standardForm}. To show optimality of $\bfr(\bfx^*)$, from \eqref{eqStar} 

\begin{align}
	\bfc^T \bfx^k + {\bflambda^k}^T\bfr(\bfx^k) + \dfrac{1}{2\mu^k}\bfr(\bfx^k)^T\bfr(\bfx^k) &\leq  \bfc^T \bar{\bfx} \nonumber \\
    \implies\qquad
\displaystyle \lim_{k \in \mathcal{K}} \big ( \bfc^T \bfx^k + {\bflambda^k}^T\bfr(\bfx^k) + \dfrac{1}{2\mu^k}\bfr(\bfx^k)^T\bfr(\bfx^k) \big ) &\leq \displaystyle \lim_{k \in \mathcal{K}} \bfc^T \bar{\bfx} \nonumber \\
    \implies\qquad\!\!\!\!
\bfc^T \bfx^* +\displaystyle \lim_{k \in \mathcal{K}} {\bflambda^k}^T\bfr(\bfx^k) +\displaystyle \lim_{k \in \mathcal{K}} \dfrac{1}{2\mu^k}\bfr(\bfx^k)^T\bfr(\bfx^k) &\leq  \bfc^T \bar{\bfx} 
\label{eqLimit3}
\end{align}

\noindent For all $k>W$ there is an index $p$ with $k-W \leq p < k$  and $\bflambda^{k}=\mu^{p}\bfr(\bfx^{p})$ so

\begin{equation*}
\displaystyle \lim_{k \in \mathcal{K}} {\bflambda^k}^T\bfr(\bfx^k)   =  \displaystyle \lim_{k \in \mathcal{K}} {\mu^{p}\bfr(\bfx^{p})}^T\bfr(\bfx^k) = 0, 
\end{equation*}

\noindent since $ \left\{\mu^k\right\} \to 0$ for $k = 1,2\ldots$ and $p \to \infty$ as $k \to \infty$. This value can be substituted in \eqref{eqLimit3} giving  
\begin{equation*}
\bfc^T \bfx^*  +\displaystyle \lim_{k \in \mathcal{K}} \dfrac{1}{2\mu^k}\bfr(\bfx^k)^T\bfr(\bfx^k) \leq  \bfc^T \bar{\bfx}.
\end{equation*}

\noindent For each $k$, $\mu^k > 0$ and $\bfr(\bfx)^T\bfr(\bfx) \geq 0$ for each $\bfx$, so

$$\dfrac{1}{2\mu^k}\bfr(\bfx^k)^T\bfr(\bfx^k) \geq 0 \quad  \forall k \implies 
\bfc^T \bfx^* \leq \bfc^T \bfx^*  +\displaystyle \lim_{k \in \mathcal{K}} \dfrac{1}{2\mu^k}\bfr(\bfx^k)^T\bfr(\bfx^k) \leq  \bfc^T \bar{\bfx}. $$

\noindent Consequently, $\bfx^*$ is feasible for \eqref{eq:standardForm} and has an objective value less than or equal to the optimal value $\bfc^T\bar{\bfx}$ so $\bfx^*$ is a solution of \eqref{eq:standardForm}.

\end{proof}

\section{Fast approximate solution of LP problems}\label{sect:FASLP}

Although Theorem~\ref{Theorem_1} establishes an important ``best case'' result for the behaviour of the Idiot crash, the results in Table~\ref{tab:ProbsRsduObjEr} show that this is far from being representative of its practical performance. For some problems it yields a near-optimal point; for others it terminates at a point which is far from being feasible. What problem characteristics might explain this behaviour and, if it is seen to perform well for a whole class of problems, to what extent is this of further value?

\subsection{Problem characteristics affecting the performance of the Idiot crash}

There is a clear relation between the condition number of the matrix $A$ and the solution error of the point returned by the Idiot crash. Of the problems in Table~\ref{tab:ProbsRsduObjEr}, all but \LP{storm\_1000} are sufficiently small for the condition of $A$ (after the \clp presolve) to be computed with the resources available to the authors. These values are plotted against the solution error in Figure~\ref{fig:SolErVsLPCond}, which clearly shows that the problems solved accurately have low condition number. Notable amongst these are the QAPs which, with the exception of \LP{maros-r7}, have very much the smallest condition numbers of the 29 problems in Table~\ref{tab:ProbsRsduObjEr} for which condition numbers could be computed.

Nocedal and Wright~\cite[p.512]{NoWr06} observe that ``there has been a resurgence of interest in penalty methods, in part because of their ability to handle degenerate problems''. However, analysis of optimal basic solutions of the problems in Table~\ref{tab:ProbsRsduObjEr} showed no meaningful correlation between their primal or dual degeneracy and accuracy of the point returned by the Idiot crash.
\ignore{\textcolor{violet}{
Augmented Lagrangian methods perform well if the estimates of the Lagrange multipliers $\bflambda$ are close to the exact values. For degenerate problems, such estimates are poor which has a negative effect on augmented Lagrangian algorithms. The execution of the Idiot crash is closer to a penalty method than an augmented Lagrangian method due to the update of $\bflambda$. Quadratic assignment problems are highly degenerate, thus the Idiot crash performs better in comparison to an augmented Lagrangian algorithm. If the values of $\bflambda$ are not considered at all, a standard quadratic penalty method performs well. This behaviour is also observed by. Considering the update of $\bflambda$, for all problems it affects mostly the initial major iterations. As $\mu$ decreases the role of $\bflambda$ becomes less significant and the Idiot algorithm behaves closer to a penalty method.}  \\}

\ignore{\textcolor{violet}{
One advantage of the augmented Lagrangian method is that it reduces the ill-conditioning of the subproblem via the update to $\bflambda$. However, if the subproblem is well-conditioned that is not necessary and penalty methods perform well. This is the case for QAP linearizations, which are well-conditioned but for the rank deficiency.} \textcolor{red}{Do I say that? We don't want anyone to find out about the number of distinct eigenvalues.} \\}

\ignore{\textcolor{violet}{According to Nocedal and Wright, ``he quadratic penalty approach is often used by practitioners when number of constraints is small" ~\cite[p.525]{NoWr06}. With default settings, \clp only executes the Idiot crash if the number of columns is considerably larger than the number of rows. Accordingly, QAP linearizations have many more variables than constraints. }}

\begin{figure}
\centerline{\includegraphics[height=10cm]{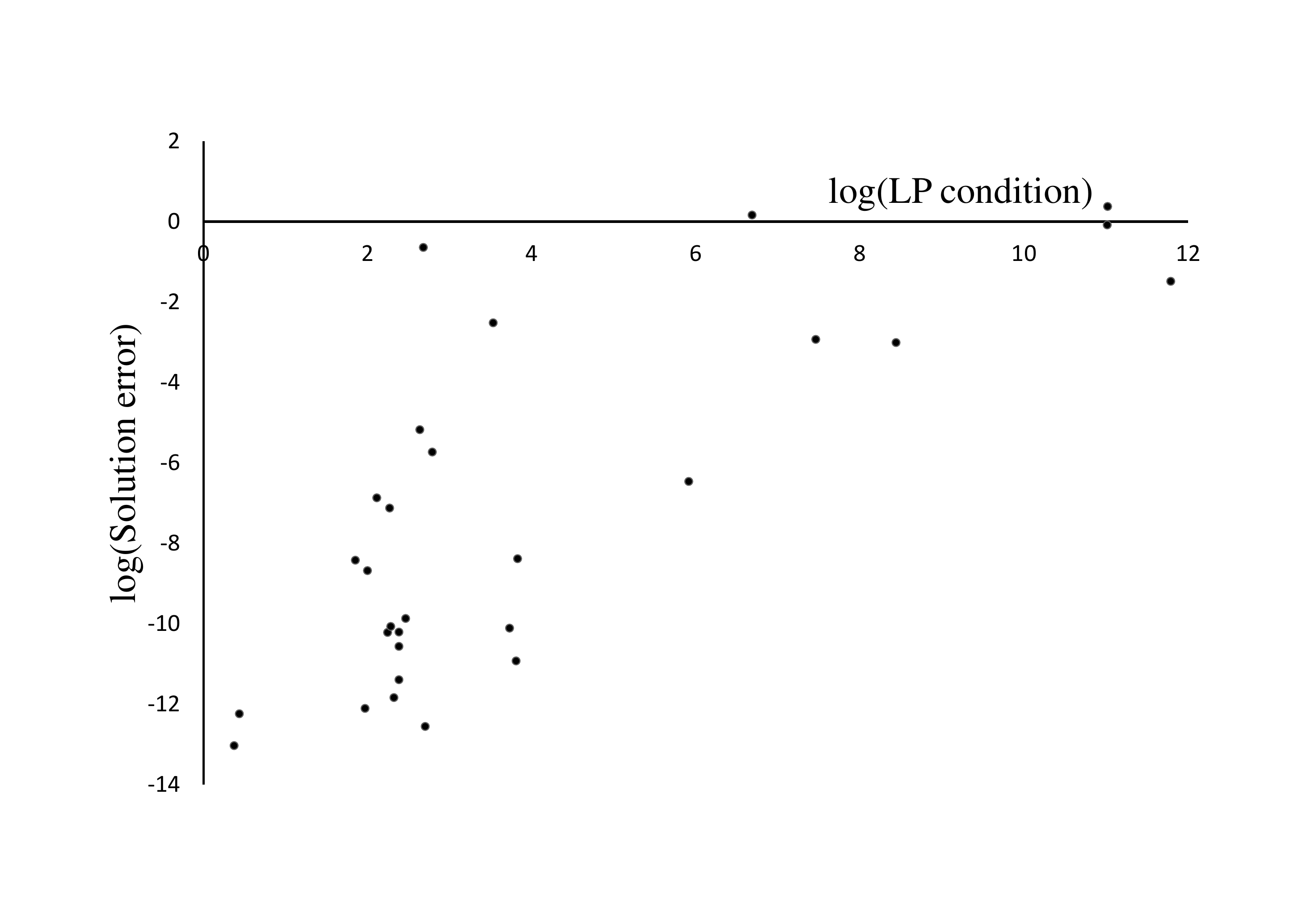}}
\vspace{-1.5cm}
\caption{Solution error and LP condition}
\label{fig:SolErVsLPCond}
\end{figure}

\subsection{The Idiot crash on QAPs}

Since the Idiot crash yields a near-optimal point for the three QAPs in Table~\ref{tab:ProbsRsduObjEr}, it is of interest to know the extent to which this behaviour is typical of the whole class of such problems, and its practical value. Both of these issues are explored in this section.

\subsubsection*{Quadratic assignment problems}
The quadratic assignment problem (QAP) is a combinatorial optimization problem, being a special case of the facility location problem. It concerns a set of facilities and a set of locations. For each pair of locations there is a distance, and for each pair of facilities there is a weight or flow specified, for instance the number of items transported between the two facilities. The problem is to assign all facilities to different locations so that the sum of the distances multiplied by the corresponding flows is minimized. QAPs are well known for being very difficult to solve, even for small instances. They are NP-hard and the travelling salesman problem can be seen as a special case. Often, rather than solve the quadratic problem an equivalent linearization is solved. A comprehensive survey of QAP problems and their solution is given by Loiola~\etal~\cite{LoAbEtAl07}. 

The test problems \LP{nug15}, \LP{qap12} and \LP{qap15} referred to above are examples of the Adams and Johnson linearization~\cite{AdJo94}. Although there are many specialised techniques for solving QAP problems, and alternative linearizations, the popular Adams and Johnson linearization is known to be hard to solve using the simplex method or interior point methods~\cite{ReRaDr95}. Table~\ref{tab:IdiotQAPs} gives various performance measures for the Idiot crash when applied to the Nugent~\cite{NuVoRu68} problems, using the default iteration limit of \clp. The first of these is the value of the residual $\|A\bfx-\bfb\|_2$ at the point obtained by the Idiot crash. For the smaller problems this is significantly larger due to the Idiot crash choosing to perform relatively few iterations for these problems. However, for the larger problems the point is clearly feasible to within the \clp simplex tolerance. The objective function value and relative error are also given, and in most cases the latter is within 1\%. It is not known why the errors for \LP{nug15} and \LP{nug20} are significantly larger than for the other large instances. Finally, the time for Idiot is given. Whilst this is growing, Idiot clearly obtains a near-optimal solution for QAP instances \LP{nug20} and \LP{nug30} which cannot be solved with commercial simplex or interior point implementations on the machine used for the Idiot experiments due to excessive time or memory requirements. 

\begin{table}
\centerline{
\begin{tabular}{|l|rrr|r|rr|r|}\hline
Model&Rows&Columns&Optimum&Residual&Objective&Error&Time\\\hline
\nullbox{12pt}{0pt}
\LP{nug05} & 210 & 225 & 50.00 & $\expten{1.5}{-3}$ & 54.10 & $\expten{-7.6}{-2}$ & 0.01\\
\LP{nug06} & 372 & 486 & 86.00 & $\expten{2.7}{-3}$ & 87.17 & $\expten{-1.3}{-2}$ & 0.02\\
\LP{nug07} & 602 & 931 & 148.00 & $\expten{5.3}{-3}$ & 150.37 & $\expten{-1.6}{-2}$ & 0.03\\
\LP{nug08} & 912 & 1613 & 203.50 & $\expten{8.2}{-3}$ & 207.51 & $\expten{-1.9}{-2}$ & 0.06\\
\LP{nug12} & 3192 & 8856 & 522.89 & $\expten{3.6}{-10}$ & 523.87 & $\expten{-1.9}{-3}$ & 7.19\\
\LP{nug15} & 6330 & 22275 & 1041.00 & $\expten{4.4}{-9}$ & 1265.14 & $\expten{-1.8}{-1}$ & 12.88\\
\LP{nug20} & 15240 & 72600 & 2182.00 & $\expten{2.8}{-9}$ & 2852.50 & $\expten{-2.4}{-1}$ & 41.32\\
\LP{nug30} & 52260 & 379350 & 4805.00 & $\expten{1.1}{-10}$ & 4811.38 & $\expten{-1.3}{-3}$ & 212.62\\
\hline
\end{tabular}}
\caption{Performance of the Idiot crash on QAP linearizations}
\label{tab:IdiotQAPs}
\end{table}

There is currently no practical measure of the point obtained by the Idiot crash which gives any guarantee that it can be taken as a near-optimal solution of the problem. The result of Theorem~\ref{Theorem_1}
cannot be used since the major iteration minimization is approximate, and the major iterations are terminated rather than being performed to the limit. Clearly the measure of objective error in Table~\ref{tab:IdiotQAPs} requires knowledge of the optimal objective function value. What can be guaranteed, however, is that since the point returned is feasible, the corresponding objective value is an upper bound on the optimal objective function value. With the aim of identifying an interval containing the optimal objective function value, the Idiot crash was applied to the dual of the linearization. Although it obtained points which were feasible for the dual problems to within the \clp simplex tolerance, the objective values were far from being optimal so the lower bounds thus obtained were too weak to be of value. 

\section{Conclusions}\label{sect:Conclusions}

Forrest's aim in developing the Idiot crash for LP problems was to determine a point which, when used to obtain a starting basis for the primal revised simplex method, results in a significant reduction in the time required to solve the problem. This paper has distilled the essence of the Idiot crash and presented it in algorithmic form for the first time. Practical experiments have demonstrated that, for some large scale LP test problems, Forrest's aim is achieved. For LP problems when the Idiot crash is not advantageous, this is identified without meaningful detriment to the performance of \clp. For the best case in which the Idiot sub-problems are solved exactly, Theorem~\ref{Theorem_1} shows that every limit point of the sequence of Idiot iterations is a solution of the corresponding LP problem. It is observed empirically that, typically, the lower the condition of the constraint matrix $A$, the closer the point obtained by the Idiot crash is to being an optimal solution of the LP problem. For linearizations of quadratic assignment problems, it has been demonstrated that the Idiot crash consistently yields near-optimal solutions, achieving this in minutes for instances which are intractable on the same machine using commercial LP solvers. Thus, in addition to achieving Forrest's initial aim, the Idiot crash is seen as being useful in its own right as a fast solver for amenable LP problems.

\bibliographystyle{plain}
\bibliography{AL}


\end{document}